\newtheorem{theorem}{Theorem}[section]
\newtheorem{lemma}[theorem]{Lemma}
\theoremstyle{definition}
\title{\textbf{Disconnected Character graphs and odd Dominating sets}}
\author{Mahdi Ebrahimi\footnote{ m.ebrahimi.math@ipm.ir}
 \\
 {\small\em  School of Mathematics, Institute for Research in Fundamental Sciences (IPM)},\\{\small\em P.O. Box: 19395--5746, Tehran, Iran}}
\date{}
\begin{document}

\maketitle

\begin{abstract}
Suppose $\Gamma$ is a finite simple graph. If $D$ is a dominating set of $\Gamma$ such that each $x\in D$ is contained in the set of vertices of an odd cycle of $\Gamma$, then we say that $D$ is an odd dominating set for $\Gamma$.
For a finite group $G$, let $\Delta(G)$ denote the character graph built on the set of degrees of the irreducible complex characters of $G$. In this paper, we show that the complement of $\Delta(G)$ contains an odd dominating set, if and only if $\Delta(G)$ is a disconnected graph with non-bipartite complement.

 \end{abstract}
\noindent {\bf{Keywords:}}  Character graph, Character degree, Dominating set, Disconnected graph. \\
\noindent {\bf AMS Subject Classification Number:}  20C15, 05C69, 05C25.

\section{Introduction}
$\noindent$ Let $G$ be a finite group and $R(G)$ be the solvable radical of $G$. Also let ${\rm cd}(G)$ be the set of all character degrees of $G$, that is,
 ${\rm cd}(G)=\{\chi(1)|\;\chi \in {\rm Irr}(G)\} $, where ${\rm Irr}(G)$ is the set of all complex irreducible characters of $G$. The set of prime divisors of character degrees of $G$ is denoted by $\rho(G)$.
A useful way to study the character degree set of a finite group $G$ is to associate a graph to ${\rm cd}(G)$.
One of these graphs is the character graph $\Delta(G)$ of $G$ \cite{[I]}. Its vertex set is $\rho(G)$ and two vertices $p$ and $q$ are joined by an edge if the product $pq$ divides some character degree of $G$. We refer the readers to a survey by Lewis \cite{[M]} for results concerning this graph and related topics.

 If we know $\Delta(G)$ is disconnected, we can often say a lot about the structure of the group $G$. For instance, Lewis and White in \cite{[nsc]} proved that $\Delta(G)$ has three connected components if and only if $G=S\times A$, where $S\cong \rm{PSL}_2(2^n)$ for some integer $n\geq 2$ and $A$ is an abelian group. Also when $G$ is non-solvable and $\Delta(G)$ has two connected components, they \cite{[nsc]} showed that $G$ has normal subgroups $N\subseteq K$ so that $K/N\cong \rm{PSL}_2(p^n)$ where $p$ is a prime and $n$ is an integer so that $p^n\geq 4$. Furthermore, $G/K$ is abelian and $N$ is either abelian or metabelian. As another instance, all finite solvable groups $G$ whose character graph $\Delta(G)$ is disconnected have been completely classified by Lewis \cite{los}. In this paper, we wish to use the dominating sets of the complement of $\Delta(G)$ to describe a new characterization of disconnected character graphs with non-bipartite complement.

A dominating set for a graph $\Gamma$ with vertex set $V$ is a subset $D$ of $V$ such that every vertex not in $D$ is adjacent to at least one member of $D$. The domination number of $\Gamma$ is the number of vertices in a smallest dominating set for $\Gamma$. If $D$ is a dominating set of $\Gamma$ such that each $x\in D$ is contained in the set of vertices of an odd cycle of $\Gamma$, then we say that $D$ is an odd dominating set of $\Gamma$. Now we are ready to state our main result. \\

\noindent \textbf{Main Theorem.}  \textit{Let $G$ be a finite group. Then the following are equivalent:\\
\textbf{a)} The complement of $\Delta(G)$ contains an odd dominating set $D$. \\
\textbf{b)}  The complement of $\Delta(G)$ is non-bipartite with domination number $1$.\\
\textbf{c)} $\Delta(G)$ is a disconnected graph with non-bipartite complement.\\
Also when one of the above conditions holds, then there exists a normal subgroup $R(G)<M\leq G$ so that $G/R(G)$ is an almost simple group with socle $S:=M/R(G)\cong \rm{PSL}_2(u^\alpha)$, where $u$ is a prime, $\alpha$ is a positive integer and $u^\alpha \geq 5$.
 }

\section{Preliminaries}
$\noindent$ In this paper, all groups are assumed to be finite and all
graphs are simple and finite. The complement of a graph $\Gamma$ is denoted by $\Gamma^c$. For a finite group $G$, the set of prime divisors of $|G|$ is denoted by $\pi(G)$. Also note that for an integer $n\geq 1$,  the set of prime divisors of $n$ is denoted by $\pi(n)$.
If $H\leq G$ and $\theta \in \rm{Irr}(H)$, we denote by $\rm{Irr}(G|\theta)$ the set of irreducible characters of $G$ lying over $\theta$ and define $\rm{cd}(G|\theta):=\{\chi(1)|\,\chi \in \rm{Irr}(G|\theta)\}$. We begin with Corollary 11.29 of \cite{[isa]}.

\begin{lemma}\label{fraction}
Let $ N \lhd G$ and $\varphi \in \rm{Irr}(N)$. Then for every $\chi \in \rm{Irr}(G|\varphi)$, $\chi(1)/\varphi(1)$ divides $[G:N]$.
\end{lemma}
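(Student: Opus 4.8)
Since this statement is the standard Clifford-theoretic divisibility result, the plan is to combine Clifford's theorem, the Clifford correspondence, and It\^{o}'s theorem through the machinery of character triples. First I would invoke Clifford's theorem: writing $T = I_G(\varphi)$ for the inertia subgroup of $\varphi$ in $G$, we have $\chi_N = e\sum_{i=1}^{t}\varphi_i$, where the $\varphi_i$ run over the distinct $G$-conjugates of $\varphi$, $t = [G:T]$, and $e = [\chi_N,\varphi]$. Hence $\chi(1)/\varphi(1) = et$. By the Clifford correspondence there is a unique $\psi \in \mathrm{Irr}(T\,|\,\varphi)$ with $\psi^G = \chi$, and $\psi_N = e\varphi$, so that $\chi(1) = [G:T]\,\psi(1)$ with $\psi(1) = e\varphi(1)$.

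Next I would separate the two factors. Since $N \le T \le G$ we have $[G:N] = [G:T]\,[T:N]$, so the factor $t = [G:T]$ automatically divides $[G:N]$. It therefore suffices to prove that $e = \psi(1)/\varphi(1)$ divides $[T:N]$, for then $et$ divides $[T:N]\,[G:T] = [G:N]$ by multiplicativity of the index. Equivalently, after replacing $G$ by $T$ I may assume that $\varphi$ is $G$-invariant, and the problem reduces to showing that the degree ratio $\chi(1)/\varphi(1)$ divides $[G:N]$ when $\varphi$ is stable under $G$.

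To settle the invariant case I would pass to an isomorphic character triple. By the theory of character triples, $(G,N,\varphi)$ is isomorphic to a triple $(\Gamma, Z, \lambda)$ in which $Z$ is a central subgroup of $\Gamma$, $\lambda$ is a linear character of $Z$, and $\Gamma/Z \cong G/N$; moreover the isomorphism preserves the ratio of degrees over the respective normal subgroups, so that $\chi(1)/\varphi(1) = \hat\chi(1)/\lambda(1) = \hat\chi(1)$ for the character $\hat\chi \in \mathrm{Irr}(\Gamma\,|\,\lambda)$ corresponding to $\chi$. Since $Z$ is an abelian (indeed central) normal subgroup of $\Gamma$, It\^{o}'s theorem yields that $\hat\chi(1)$ divides $[\Gamma:Z] = [G:N]$, which is exactly the required conclusion.

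The main obstacle is precisely the invariant case, namely proving $e \mid [T:N]$: the conjugation factor $t$ is handled trivially by the index identity, but controlling the ramification index $e$ genuinely requires either the character-triple reduction to a central extension followed by It\^{o}'s theorem, or the equivalent observation that the characters over $\varphi$ correspond to projective representations of $G/N$ whose degrees divide $|G/N|$. Of course, this statement is exactly Corollary 11.29 of \cite{[isa]}, so in the present paper it is quoted rather than reproved.
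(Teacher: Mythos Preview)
Your argument is correct and is essentially the standard proof of Corollary~11.29 in \cite{[isa]}: reduce via Clifford correspondence to the inertia group, then use a character-triple isomorphism to a central extension and apply It\^{o}'s theorem. As you yourself observe at the end, the paper does not give any proof of this lemma; it is stated purely as a citation of \cite{[isa]}, so there is no ``paper's own proof'' to compare against beyond noting that your sketch recovers the textbook argument.
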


  \begin{lemma}\label{lw}\cite{[non]}
 Let $p$ be a prime, $f\geqslant 2$ be an integer, $q=p^f\geqslant 5$ and $S\cong \rm{PSL}_2(q)$. If $q\neq9$ and $S\leqslant G\leqslant \rm{Aut}(S)$, then $G$ has irreducible characters of degrees $(q+1)[G:G\cap \rm{PGL}_2(q)]$ and $(q-1)[G:G\cap \rm{PGL}_2(q)]$.
 \end{lemma}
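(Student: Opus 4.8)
The plan is to reduce the assertion to the existence of certain induced irreducible characters, and then to a counting problem for regular orbits of field automorphisms. Write $P=\mathrm{PGL}_2(q)$ and set $H=G\cap P$. Since $P\lhd \mathrm{Aut}(S)$ we have $H\lhd G$, and $G/H$ embeds in $\mathrm{Aut}(S)/P\cong \mathrm{Gal}(\mathbb{F}_q/\mathbb{F}_p)\cong C_f$; hence $G/H$ is cyclic of order $t:=[G:G\cap P]$, a divisor of $f$, and as a subgroup of the cyclic group $C_f$ it is generated by the image of $\phi^{f/t}$, where $\phi\colon x\mapsto x^p$ is the Frobenius field automorphism. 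The two target degrees are exactly $t(q+1)$ and $t(q-1)$. The crux of the reduction is that if $\psi\in\mathrm{Irr}(H)$ satisfies $\psi(1)=q+1$ (resp. $q-1$) and has trivial inertia $I_G(\psi)=H$, then by Clifford theory \cite{[isa]} the induced character $\psi^{G}$ is irreducible of degree $t\,\psi(1)$, which is precisely what is required. Because $G/H$ is cyclic, $I_G(\psi)=H$ is equivalent to the stabiliser of $\psi$ in $G/H$ being trivial, i.e. to $\psi$ lying in a regular $G/H$-orbit; any larger inertia group produces over $\psi$ only characters of strictly smaller degree. So it suffices to produce, for each of the two degrees, an irreducible character of $H$ on which $\langle\phi^{f/t}\rangle$ acts with a regular orbit.

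Next I would invoke the well-known character tables of $S=\mathrm{PSL}_2(q)$ and $P=\mathrm{PGL}_2(q)$, noting that $H\in\{S,P\}$ since $[P:S]=\gcd(2,q-1)\in\{1,2\}$. In either case $H$ has a family of irreducible characters of degree $q+1$ (the principal series) parametrised by the nontrivial characters $\lambda$ of a cyclic ``split torus'' of order $q-1$ or $(q-1)/2$, taken modulo inversion $\lambda\sim\lambda^{-1}$ and with the self-inverse ones removed, together with a family of degree $q-1$ (the cuspidal series) parametrised in the same way by characters $\mu$ of a cyclic ``nonsplit torus'' of order $q+1$ or $(q+1)/2$. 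Identifying such a character with an element $k$ of $\mathbb{Z}/(q-1)$ (resp. $\mathbb{Z}/(q+1)$), the Frobenius $\phi$ acts by $k\mapsto pk$, so $\phi^{f/t}$ acts by multiplication by $r:=p^{f/t}$. The problem thus becomes purely arithmetic: find an admissible class $\{k,-k\}$ whose orbit under multiplication by $r$ is regular of length $t$, modulo $q-1$ for the degree $q+1$ and modulo $q+1$ for the degree $q-1$.

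For this I would bound the number of ``bad'' classes, namely those fixed by some $\phi^{f/\ell}$ with $\ell$ a prime divisor of $t$ (these are exactly the non-regular classes, since a point of a cyclic group action has nontrivial stabiliser iff it is fixed by an element of prime order). A class is fixed by $\phi^{f/\ell}$ up to inversion only if $(p^{f/\ell}-1)k\equiv 0$ or $(p^{f/\ell}+1)k\equiv 0$, so for each such $\ell$ there are at most $2p^{f/\ell}\le 2p^{f/2}=2\sqrt{q}$ bad classes. Summing over the few prime divisors of $t\mid f$ gives a bound of order $\sqrt{q}\,\log f$, whereas the number of admissible classes for each degree grows linearly in $q$; hence for all but finitely many $q$ a regular orbit exists simultaneously for both degrees, which yields the two induced characters. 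The main obstacle is then the finite list of small prime powers $q=8,16,25,27,32,\dots$ below this threshold, which I would settle by explicitly exhibiting suitable $\lambda$ and $\mu$ (as in the $q=8$ check, where the three degree-$9$ characters already form a single regular orbit under $\phi$). It is exactly in such small-case bookkeeping that $q=9$ must be excluded: $\mathrm{PSL}_2(9)\cong A_6$ carries the exceptional outer automorphism, so its three index-$2$ overgroups $\mathrm{PGL}_2(9)$, $S_6$ and $M_{10}$ have genuinely different degree patterns, which both breaks the uniform parametrisation and invalidates the conclusion. Finally, the case $t=1$ is immediate, since one simply takes irreducible characters of $H$ of the two degrees; all the genuine work lies in the regular-orbit construction when $t>1$.
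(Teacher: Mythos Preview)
The paper does not prove this lemma at all: it is quoted verbatim from Lewis and White \cite{[non]} and used as a black box, so there is no ``paper's own proof'' to compare your attempt against. Your task was therefore to reconstruct a result the authors simply imported.

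That said, your strategy is the standard and correct one, and is in fact essentially the argument Lewis and White use in \cite{[non]}. The reduction via $H=G\cap\mathrm{PGL}_2(q)$ to finding a principal-series and a cuspidal character of $H$ lying in a regular $G/H$-orbit is exactly right, as is the identification of the $\langle\phi\rangle$-action with multiplication by $p$ on $\mathbb{Z}/(q\mp 1)$. Two points deserve tightening. First, the crude bound $2\sqrt{q}\log_2 f$ against roughly $(q-1)/4$ admissible classes leaves a rather long list of ``small'' $q$ to be checked by hand; you should either sharpen the estimate (e.g.\ exploit that $p^{f/\ell}-1\mid q-1$ exactly, so the bad set for the split torus has size $p^{f/\ell}-1$, and sum $\sum_\ell p^{f/\ell}$ more carefully) or explicitly delimit and dispatch the residual cases. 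Second, your explanation of why $q=9$ fails is slightly off: $\mathrm{Aut}(A_6)=P\Gamma L_2(9)$, so the parametrisation does not break down, but the regular-orbit count genuinely fails there (for $M_{10}$ the required degree $2(q+1)=20$ does not occur), which is the real reason for the exclusion.
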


\begin{lemma}\label{good}\cite{[Ton]}
Let $N$ be a normal subgroup of a group $G$ so that $G/N\cong S$, where $S$ is a non-abelian simple group. Let $\theta \in \rm{Irr}(N)$. Then either $\chi (1)/\theta(1)$ is divisible by two distinct primes in $\pi(G/N)$ for some $\chi \in \rm{Irr}(G|\theta)$ or $\theta$ is extendible to $\theta_0\in \rm{Irr}(G)$ and $G/N\cong A_5$ or $\rm{PSL}_2(8)$.
\end{lemma}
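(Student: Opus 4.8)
The plan is to isolate one substantial group-theoretic statement and then deduce the three equivalences and the structural conclusion from it by elementary graph theory. Write $H:=\Delta(G)^c$. The two translations I would record first are: the domination number of $H$ equals $1$ exactly when $H$ has a vertex adjacent to every other vertex, i.e.\ a \emph{universal} vertex, and a universal vertex of $H$ is precisely an \emph{isolated} vertex of $\Delta(G)$; and any graph possessing an odd dominating set contains an odd cycle, hence is non-bipartite. With these in hand the implication \textbf{b)}$\Rightarrow$\textbf{a)} is immediate: if $v$ is a universal vertex of the non-bipartite graph $H$, pick any odd cycle of $H$; it has length $\ge 3$, so it contains an edge $pq$ with $p,q\ne v$, and then $v,p,q$ span a triangle through $v$ because $v$ is universal. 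Thus $\{v\}$ is an odd dominating set.

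Everything else I would reduce to the following \emph{Main Lemma}: if $H=\Delta(G)^c$ is non-bipartite, then $G/R(G)$ is almost simple with socle $S:=M/R(G)\cong\mathrm{PSL}_2(u^\alpha)$ for a prime $u$ and $u^\alpha\ge 5$, and moreover $\Delta(G)$ is disconnected and has an isolated vertex. Granting this, the theorem follows quickly: \textbf{a)}$\Rightarrow H$ non-bipartite $\Rightarrow$ (Main Lemma) $\Delta(G)$ disconnected with $H$ still non-bipartite, which is \textbf{c)}, and the Main Lemma simultaneously yields the asserted subgroup $M$; conversely \textbf{c)}$\Rightarrow H$ non-bipartite $\Rightarrow$ (Main Lemma) $\Delta(G)$ has an isolated vertex, so $H$ has a universal vertex and domination number $1$, which together with non-bipartiteness is \textbf{b)}. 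Combined with \textbf{b)}$\Rightarrow$\textbf{a)} this closes the cycle.

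To prove the Main Lemma I would argue from a non-abelian chief factor. First I reduce to $G$ non-solvable: by P\'alfy's three-prime theorem $H$ is triangle-free when $G$ is solvable, and a short analysis using the known structure of solvable character graphs (in particular Lewis's classification in the disconnected case) rules out longer odd cycles as well, so $H$ is bipartite; hence non-bipartiteness forces a non-abelian composition factor. Put $R:=R(G)$ and choose a minimal normal subgroup $M/R$ of $G/R$, say $M/R\cong S^t$ with $S$ non-abelian simple. The engine is Lemma \ref{good}: applied to the relevant section it shows that, unless we are in the exceptional extendibility case with $S\cong A_5$ or $\mathrm{PSL}_2(8)$, some character-degree ratio is divisible by two distinct primes of $\pi(S)$; using Lemma \ref{fraction} to bound these ratios by the relevant indices, such ratios translate into edges of $\Delta(G)$ that knit the primes of $\pi(S)$ together and tie them to the rest of $\rho(G)$. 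Accumulating enough of these edges would make $H$ bipartite, so non-bipartiteness severely restricts $S$ and forces $t=1$; since the exceptions $A_5=\mathrm{PSL}_2(4)$ and $\mathrm{PSL}_2(8)$ are themselves of the desired type, I expect to land on $S\cong\mathrm{PSL}_2(u^\alpha)$ with $G/R$ almost simple.

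Finally, with $G/R$ almost simple of socle $\mathrm{PSL}_2(q)$, $q=u^\alpha\ge5$, I would use Lemma \ref{lw} to write down the degrees $(q\pm1)[G/R:(G/R)\cap\mathrm{PGL}_2(q)]$ and read off that the prime supports of the degrees of $G$ split into the blocks $\{u\}$, $\pi(q-1)$, $\pi(q+1)$ (amalgamated through the prime $2$ when $q$ is odd); this exhibits $\Delta(G)$ as disconnected and, via Lemma \ref{fraction} controlling the contribution of the solvable radical $R$ at the prime $u$, shows that $u$ is an isolated vertex, completing the Main Lemma. The hard part will be the restriction step in the previous paragraph: ruling out every non-abelian simple $S$ other than $\mathrm{PSL}_2(q)$ and every composite socle ($t>1$) is inherently classification-dependent, since one must verify for each family that the primes of $\pi(S)$ become interconnected enough in $\Delta(G)$ that no odd cycle can survive in $H$; this is exactly where the precise exceptional list of Lemma \ref{good} and the explicit degrees of Lemma \ref{lw} are indispensable. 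A secondary technical point is ensuring that the isolatedness of $u$ is not destroyed when passing from the section $S$ to the full group $G$ across $R$, which I would again handle with Lemma \ref{fraction}.
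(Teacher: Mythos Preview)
Your proposal does not address the stated lemma at all. The statement you were asked to prove is Lemma~\ref{good}, a result of Tong-Viet about extension of characters through a simple quotient; the paper does not prove it but merely quotes it from \cite{[Ton]}. What you have written is instead a proof sketch of the paper's \emph{Main Theorem}, and in that sketch you explicitly invoke Lemma~\ref{good} as a black-box input (``The engine is Lemma~\ref{good}''). An argument that uses a statement as an input cannot be a proof of that statement.

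Even read as an attempt at the Main Theorem, your route diverges from the paper's and contains a real gap. The paper's engine is not Lemma~\ref{good} but Lemma~\ref{cycle} (the odd-cycle characterisation from \cite{AC}): from any odd cycle in $\Delta(G)^c$ one immediately obtains a normal $\mathrm{(P)SL}_2(u^\alpha)$ section, and the \emph{odd dominating set} hypothesis is then exploited in Lemma~\ref{main} to show that all such sections coincide and that $G/R(G)$ is almost simple with that socle; only afterwards are Lemmas~\ref{lw}, \ref{good} and \ref{ote} invoked to prove that $u$ is isolated. You instead posit a ``Main Lemma'' asserting that non-bipartiteness of $\Delta(G)^c$ \emph{alone} forces $\Delta(G)$ to be disconnected with an isolated vertex. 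This is strictly stronger than anything the paper establishes, and your justification is the vague promise to ``rule out every non-abelian simple $S$ other than $\mathrm{PSL}_2(q)$'' via Lemma~\ref{good}; the paper never performs such a case analysis, and nothing in your outline explains why a single odd cycle in $H$---whose vertices need not dominate $\rho(G)$---should disconnect $\Delta(G)$. The domination hypothesis in condition (a) is precisely what the paper uses (via Lemma~\ref{main}) to close that gap, and you have discarded it.
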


  We now state some relevant results on character graphs
needed in the next section.

\begin{lemma}\label{ote}\cite{ME}
Let $G$ be a finite group, $R(G)< M\leqslant G$, $S:=M/R(G)$ be isomorphic to $ \rm{PSL}_2(q)$, where for some prime $p$ and positive integer $f\geqslant 1$, $q=p^f$, $|\pi(S)|\geqslant 4$ and $S\leqslant G/R(G)\leqslant \rm{Aut}(S)$. Also let $\theta\in \rm{Irr}(R(G))$. If $\Delta(G)^c$ is not  bipartite,  then $\theta$ is $M$-invariant.
\end{lemma}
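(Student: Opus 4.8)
The plan is to prove the contrapositive: assuming $\theta$ is \emph{not} $M$-invariant, I will show that $\Delta(G)^c$ is bipartite. Recall that $\Delta(G)^c$ is bipartite precisely when $\rho(G)$ is a union of two cliques of $\Delta(G)$, so this is the combinatorial target. First I would record the ambient structure: since $S=M/R(G)$ is the socle of $G/R(G)\leq \mathrm{Aut}(S)$, the subgroup $S$ is normal in $G/R(G)$, hence $M\lhd G$ and $R(G)\lhd G$. Writing $q=p^f$, the hypothesis $|\pi(S)|\geq 4$ forces $q\geq 11$, and in particular $S$ is isomorphic to neither $A_5$ nor $\mathrm{PSL}_2(8)$ (both of which have only three prime divisors); this is exactly what is needed to discard the exceptional branch of Lemma \ref{good}. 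I would also fix the two reference cliques coming from Lemma \ref{lw}: with $k=[G/R(G):(G/R(G))\cap \mathrm{PGL}_2(q)]$, the degrees $(q-1)k$ and $(q+1)k$ lie in $\mathrm{cd}(G)$, so $\pi((q-1)k)$ and $\pi((q+1)k)$ are cliques of $\Delta(G)$, while $p$ is a vertex via the Steinberg degree.

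The Clifford-theoretic core comes next. Let $T=I_M(\theta)$ be the inertia subgroup of $\theta$ in $M$; since $\theta$ is not $M$-invariant and $S$ is simple, $U:=T/R(G)$ is a \emph{proper} subgroup of $S$. Because $\theta$ is $T$-invariant, each $\psi\in\mathrm{Irr}(T\mid\theta)$ satisfies $\psi_{R(G)}=e\theta$ for some integer $e\geq 1$, and the Clifford correspondence gives $\mathrm{Irr}(M\mid\theta)=\{\psi^M:\psi\in\mathrm{Irr}(T\mid\theta)\}$ with $\psi^M(1)=[S:U]\,e\,\theta(1)$. Thus $[S:U]\,\theta(1)$ divides $\chi(1)$ for \emph{every} $\chi\in\mathrm{Irr}(M\mid\theta)$. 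Passing to $G$: for $\xi\in\mathrm{Irr}(G\mid\chi)$ we have $\chi(1)\mid\xi(1)$, so $\xi(1)\in\mathrm{cd}(G)$ is divisible by every prime of $\pi([S:U])\cup\pi(\theta(1))$. Hence $\pi([S:U])\cup\pi(\theta(1))$ is a clique of $\Delta(G)$. This is the mechanism that converts the non-invariance of $\theta$ into new edges of $\Delta(G)$.

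The main step is then to run through Dickson's classification of the proper subgroups $U$ of $\mathrm{PSL}_2(q)$ and, in each case, read off enough of $\pi([S:U])$ to collapse $\rho(G)$ into two cliques. I would organize the cases by whether $U$ contains a full Sylow $p$-subgroup. If it does not (the torus-normalizer/dihedral, $A_4$, $S_4$, $A_5$ and subfield cases), then $p\mid[S:U]$ and the index also carries the odd part of $q-1$ or of $q+1$; the clique above then attaches $p$ to an entire side, so that $\{p\}\cup\pi((q+1)k)$ (or $\{p\}\cup\pi((q-1)k)$) together with the opposite side exhibit $\rho(G)$ as two cliques. If $U$ does contain a Sylow $p$-subgroup, then $U$ is parabolic and $[S:U]=(q+1)m$ with $m\mid (q-1)/\gcd(2,q-1)$, which fuses $\pi(q+1)$ with part of $\pi(q-1)$; here I would feed the ``two distinct prime divisors'' conclusion of Lemma \ref{good} into the clique $\pi([S:U])\cup\pi(\theta(1))$ to complete the fusion of the two tori into a single clique, again leaving two cliques overall.

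The hardest part will be the bookkeeping needed to make ``two cliques'' literally correct rather than merely morally correct. Three issues must be controlled: the prime $2$, which lies in $\pi(q-1)\cap\pi(q+1)$ and can be stripped from an index by the divisions by $\gcd(2,q-1)$; the radical primes in $\rho(G)\setminus\pi(S)$ that do not already divide $\theta(1)$ and so are not captured by the clique $\pi([S:U])\cup\pi(\theta(1))$; and the delicate parabolic subcase $U$ equal to a full Borel subgroup, where $[S:U]=q+1$ has all its primes inside $\pi(q+1)$, so that one genuinely needs an extra character (via Lemma \ref{good}, or a direct degree computation in the almost simple group $G/R(G)$) to produce a prime outside $\pi(q+1)$. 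Resolving these points requires the precise character degrees of $\mathrm{PSL}_2(q)$ and its almost simple overgroups rather than the crude divisibility by $[S:U]$; once they are settled, $\rho(G)$ is a union of two cliques, $\Delta(G)^c$ is bipartite, and this contradiction with the hypothesis shows that $\theta$ is $M$-invariant.
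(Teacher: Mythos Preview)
The paper does not contain a proof of this lemma at all: Lemma~\ref{ote} is quoted verbatim from the reference \cite{ME} and is used as a black box in the proof of the Main Theorem. Consequently there is no ``paper's own proof'' to compare your proposal against.

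As to the proposal itself, the overall strategy (contrapositive, Clifford theory in $M$ over $R(G)$, then Dickson's list for the inertia quotient $U=T/R(G)\lneq S$) is the natural one and is presumably close to what the cited paper does. However, what you have written is explicitly a plan rather than a proof, and the three ``bookkeeping'' items you flag at the end are not peripheral---they are the crux. In particular, your mechanism only shows that $\pi([S:U])\cup\pi(\theta(1))$ is a clique of $\Delta(G)$; this says nothing about primes of $\rho(G)$ lying outside $\pi(S)\cup\pi(\theta(1))$ (and there is no reason every such prime should divide $\theta(1)$), so the leap to ``$\rho(G)$ is a union of two cliques'' is unjustified as stated. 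Likewise, in the Borel case $[S:U]=q+1$ your invocation of Lemma~\ref{good} does not obviously help: that lemma applies to a quotient $G/N\cong S$, whereas here you are working inside $M$ with inertia subgroup $T$, and the ``two distinct primes'' it produces need not land outside $\pi(q+1)$ in any controlled way. Until these points are actually resolved---most likely by a finer case analysis using the exact degrees of $\mathrm{PSL}_2(q)$ and its covers together with Gallagher's theorem for the extendible subcase---the argument remains a sketch.
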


\begin{lemma}\label{isolated} \cite{[nsc]}
Suppose the character graph $\Delta(G)$ of a non-solvable group $G$ is disconnected. Then $\Delta(G)$ has at least an isolated vertex.
\end{lemma}

\begin{lemma}\label{bipartite}\cite{1}
Let $G$ be a solvable group. Then $\Delta(G)^c$ is bipartite.
\end{lemma}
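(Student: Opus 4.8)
The plan is to prove the equivalent statement that the vertex set $\rho(G)$ can be covered by two cliques of $\Delta(G)$. Indeed, a $2$-colouring of $\Delta(G)^c$ in which each colour class is independent is exactly a partition $\rho(G)=\pi_1\cup\pi_2$ in which each $\pi_i$ is a clique of $\Delta(G)$, so ``$\Delta(G)^c$ is bipartite'' and ``$\rho(G)$ is a union of two cliques of $\Delta(G)$'' say the same thing. The basic tool is P\'alfy's three-primes theorem for solvable groups (see the survey \cite{[M]}): among any three primes in $\rho(G)$, some two are joined in $\Delta(G)$. Equivalently, $\Delta(G)$ has independence number at most $2$, i.e. $\Delta(G)^c$ is triangle-free. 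This alone is not enough, since a triangle-free graph need not be bipartite (for instance $C_5$), so the real content is to exclude odd cycles of length at least $5$ in $\Delta(G)^c$.

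First I would dispose of the case where $\Delta(G)$ is disconnected. If a connected component $A$ of $\Delta(G)$ contained a non-edge $\{p,q\}$, then choosing any vertex $r$ from another component would give three primes that are pairwise non-adjacent in $\Delta(G)$ (since $r$ lies in a different component from both $p$ and $q$), contradicting the independence bound. Hence every component is a clique; the same bound, applied to one vertex taken from each of three distinct components, forces there to be exactly two components. Therefore $\Delta(G)$ is a disjoint union of two cliques and $\Delta(G)^c$ is the complete bipartite graph joining them, which is bipartite.

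The crux is the connected case, and here triangle-freeness must be upgraded to the absence of all odd cycles. Since $\Delta(G)^c$ is triangle-free, a shortest odd cycle is chordless of length at least $5$, so it suffices to rule out an induced $C_{2k+1}$ with $k\geq 2$: a family of primes $p_1,\dots,p_{2k+1}\in\rho(G)$ with $p_ip_{i+2}$ dividing some character degree for all $i$ (indices modulo $2k+1$) but with $p_ip_{i+1}$ dividing no character degree. I would argue by induction on $|G|$, taking a minimal normal subgroup $N$ (elementary abelian, as $G$ is solvable) and comparing $\Delta(G)$ with $\Delta(G/N)$ through Clifford theory: by Gallagher's theorem together with the divisibility of Lemma \ref{fraction}, the degrees of $G$ are built from those of $G/N$ and from the degrees in $\mathrm{cd}(G\mid\lambda)$ for $\lambda\in\mathrm{Irr}(N)$, and one tracks which prime products are forced to divide a common degree. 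The aim is to show that any such pentagon or longer odd-cycle pattern among the $p_i$ forces either a further prime into the configuration producing an independent triple, or a joint divisibility that collapses one of the prescribed non-edges.

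The main obstacle is exactly this last step: controlling the interaction between the ``old'' primes inherited from $G/N$ and the prime dividing $|N|$, and thereby excluding the self-complementary $C_5$ configuration (and its longer analogues). This is where the finer structure theory of character degrees of solvable groups --- the P\'alfy--Manz--Wolf analysis of the action of $G$ on its chief factors and of the Fitting subgroup $F(G)$ --- is needed, and it is considerably deeper than the three-primes theorem used for the disconnected case. Once the induced odd cycles are excluded, $\Delta(G)^c$ has no odd cycle and is therefore bipartite, completing the connected case and, together with the previous paragraph, the proof in general.
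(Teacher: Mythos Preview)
The paper does not prove this lemma; it is quoted from \cite{1} without proof, so there is no argument in the paper to compare against.

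As for your proposal on its own terms: the disconnected case and the reduction to ruling out induced odd cycles of length at least $5$ in $\Delta(G)^c$ are correct and cleanly presented. However, the heart of the matter --- actually excluding such cycles in the connected case --- is not proved. You yourself flag this as ``the main obstacle'' and gesture toward ``finer structure theory'' and ``the P\'alfy--Manz--Wolf analysis'' without supplying an argument. The induction on $|G|$ via a minimal normal subgroup $N$ is a plausible opening move, but you do not say how the induction hypothesis is invoked, how the prime dividing $|N|$ interacts with the cycle primes, or which structural fact about solvable groups forces a collapse of one of the prescribed non-edges. In short, the proposal is an outline with the decisive step missing, not a proof; and since the argument in \cite{1} does require substantial work beyond P\'alfy's three-primes theorem, the gap you acknowledge is genuine and cannot be closed by a routine argument.
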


When $\Delta(G)^c$ is not a bipartite graph, then there exists a useful restriction on the structure of $G$ as follows:

\begin{lemma}\label{cycle} \cite{AC}
Let $G$ be a finite group and $\pi$ be a subset of the vertex set of $\Delta(G)$ such that $|\pi|> 1$ is an odd number. Then $\pi$ is the set of vertices of a cycle in $\Delta(G)^c$ if and only if $O^{\pi^\prime}(G)=S\times A$, where $A$ is abelian, $S\cong \rm{SL}_2(u^\alpha)$ or $S\cong \rm{PSL}_2(u^\alpha)$ for a prime $u\in \pi$ and a positive integer $\alpha$, and the primes in $\pi - \{u\}$ are alternately odd divisors of $u^\alpha+1$ and  $u^\alpha-1$.
\end{lemma}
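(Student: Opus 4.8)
The plan is to prove the Main Theorem by establishing the cyclic chain of implications (a)$\Rightarrow$(c)$\Rightarrow$(b)$\Rightarrow$(a), and then to extract the structural conclusion about the almost simple quotient as a byproduct of the argument for (c)$\Rightarrow$(b). The key external inputs are Lemma~\ref{cycle} (which converts odd cycles in $\Delta(G)^c$ into an $\rm{SL}_2$ or $\rm{PSL}_2$ normal section), Lemma~\ref{isolated} (a disconnected character graph of a non-solvable group has an isolated vertex), Lemma~\ref{bipartite} (solvable groups have bipartite complement), and Lemma~\ref{ote} together with Lemmas~\ref{lw}, \ref{good}, \ref{fraction} for pinning down the almost simple structure.

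First I would treat (a)$\Rightarrow$(c). Assuming $\Delta(G)^c$ has an odd dominating set $D$, note that any vertex of $D$ lies on an odd cycle, so $\Delta(G)^c$ contains an odd cycle and is therefore non-bipartite; by the contrapositive of Lemma~\ref{bipartite}, $G$ is non-solvable. The goal is then to show $\Delta(G)$ is disconnected. Since $\Delta(G)^c$ is non-bipartite, it contains a cycle on an odd vertex set $\pi$ with $|\pi|>1$, and Lemma~\ref{cycle} gives $O^{\pi'}(G)=S\times A$ with $S\cong\rm{SL}_2(u^\alpha)$ or $\rm{PSL}_2(u^\alpha)$; this forces the prime $u$ to behave as an isolated-type vertex relative to the rest. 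I would argue that the dominating property, combined with the alternating divisor structure from Lemma~\ref{cycle}, cannot be reconciled with a connected $\Delta(G)$: if $\Delta(G)$ were connected, every vertex would have a neighbor in $\Delta(G)$, so in $\Delta(G)^c$ no vertex could dominate all others while itself lying on an odd cycle. The cleanest route is to show directly that non-solvability plus a non-bipartite complement already yields the almost simple $\rm{PSL}_2$ structure, from which disconnectedness of $\Delta(G)$ follows.

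Next I would handle the heart of the matter, (c)$\Rightarrow$(b), which also delivers the structural statement. Assume $\Delta(G)$ is disconnected with non-bipartite complement. By Lemma~\ref{bipartite}, $G$ is non-solvable, and by Lemma~\ref{isolated}, $\Delta(G)$ has an isolated vertex, say $u$; equivalently $u$ is adjacent in $\Delta(G)^c$ to every other vertex, so $\{u\}$ already dominates $\Delta(G)^c$, giving domination number $1$. Non-bipartiteness is given, so (b) holds. For the structural conclusion I would invoke Lemma~\ref{cycle} on an odd cycle to produce the normal section $S\cong\rm{PSL}_2(u^\alpha)$ (ruling out the $\rm{SL}_2$ case and $u^\alpha<5$ via the degree/prime constraints of Lemmas~\ref{lw} and~\ref{good}), then use Lemma~\ref{ote} to show that every $\theta\in\rm{Irr}(R(G))$ is $M$-invariant, and finally combine this with Lemma~\ref{fraction} to conclude that $G/R(G)$ is almost simple with socle $M/R(G)\cong\rm{PSL}_2(u^\alpha)$ and $|\pi(S)|\geq 4$, which forces $u^\alpha\geq 5$.

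Finally, (b)$\Rightarrow$(a) is the short closing step: if $\Delta(G)^c$ is non-bipartite with domination number $1$, let $\{v\}$ be a dominating singleton. Non-bipartiteness guarantees an odd cycle $C$ in $\Delta(G)^c$; the vertex $v$ dominates everything, and using the $\rm{PSL}_2$ structure established above I would show that $v$ itself can be taken to lie on an odd cycle (the isolated vertex $u$ of $\Delta(G)$ plays this role, being joined in $\Delta(G)^c$ to the alternating divisors of $u^\alpha\pm1$ from Lemma~\ref{cycle}), so $\{v\}$ is an odd dominating set and (a) follows. The main obstacle I anticipate is the implication (a)$\Rightarrow$(c): translating the purely combinatorial odd-dominating hypothesis into disconnectedness of $\Delta(G)$ requires carefully leveraging the alternating-divisor description in Lemma~\ref{cycle} to rule out a connected graph, and I expect most of the work to consist of showing that the dominating vertex must in fact be isolated in $\Delta(G)$ rather than merely non-adjacent to the cycle in the complement.
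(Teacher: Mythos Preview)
Your proposal does not address the stated lemma at all. Lemma~\ref{cycle} is a cited result from \cite{AC} characterizing when a subset $\pi\subseteq\rho(G)$ of odd size $>1$ is the vertex set of a cycle in $\Delta(G)^c$ in terms of the structure of $O^{\pi'}(G)$; the paper gives no proof of it, merely quotes it as an external input. What you have written is instead a proof sketch of the \emph{Main Theorem} of the paper, using Lemma~\ref{cycle} as one of the ingredients. So there is a basic misidentification of the target statement.

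Even setting aside that mismatch and comparing your sketch to the paper's proof of the Main Theorem, your route differs and has gaps. The paper proves the chain (a)$\Rightarrow$(b)$\Rightarrow$(c)$\Rightarrow$(a), not your (a)$\Rightarrow$(c)$\Rightarrow$(b)$\Rightarrow$(a). The hard implication in the paper is (a)$\Rightarrow$(b), and it is handled via an auxiliary Lemma~\ref{main}: starting from an odd dominating set $D$, one applies Lemma~\ref{cycle} to each $x\in D$, collects the resulting $\rm{PSL}_2$ factors, and uses the dominating property at the prime $2$ to force a \emph{single} such factor and hence an almost simple quotient $G/R(G)$ with socle $S\cong\rm{PSL}_2(u^\alpha)$; then a careful case analysis with Lemmas~\ref{lw}, \ref{fraction}, \ref{good}, \ref{ote} shows $u$ is isolated in $\Delta(G)$. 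Your sketch for (a)$\Rightarrow$(c) gestures at this but never supplies the crucial ``only one $\rm{PSL}_2$ factor'' argument (the $2$-domination trick), nor the argument ruling out a nontrivial centralizer $C_{G/R(G)}(S)$; without those, you cannot conclude that $G/R(G)$ is almost simple, and your claim that ``the dominating property \dots\ cannot be reconciled with a connected $\Delta(G)$'' remains unjustified. Your (b)$\Rightarrow$(a) is also more elaborate than needed: the paper simply observes that domination number $1$ means some vertex is isolated in $\Delta(G)$, so $\Delta(G)$ is disconnected, whence (c) holds; and (c)$\Rightarrow$(a) is immediate since an isolated vertex of $\Delta(G)$ is adjacent in $\Delta(G)^c$ to every other vertex and (being on any odd cycle containing it) forms a singleton odd dominating set.
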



\section{Proof of Main Theorem}
$\noindent$In this section, we wish to prove our main result.

\begin{lemma}\label{main}
Let $G$ be a finite group. If  $\Delta(G)^c$ contains an odd dominating set $D$, then:\\
\textbf{a)} There exists a normal subgroup $R(G)<M\leq G$ so that $G/R(G)$ is an almost simple group with socle $S:=M/R(G)\cong \rm{PSL}_2(u^\alpha)$, where $u$ is a prime, $\alpha$ is a positive integer and $u^\alpha \geq 5$.\\
\textbf{b)} For every $x\in D$, there exists $\pi_x \subseteq \pi(S)$ such that $x,u\in \pi_x$ and $\pi_x$ is the set of vertices of an odd cycle in $\Delta(G)^c$.
\end{lemma}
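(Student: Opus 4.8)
The plan is to produce the simple group from a single odd cycle, lift it to a normal simple section of $\overline{G}:=G/R(G)$, and then exploit the odd dominating hypothesis to collapse the socle of $\overline{G}$ onto that one factor. Since each $x\in D$ lies on an odd cycle, $\Delta(G)^c$ is non-bipartite, so $G$ is non-solvable by Lemma \ref{bipartite}. Fixing one odd cycle with vertex set $\pi_0$ and applying Lemma \ref{cycle}, I obtain a prime $u\in\pi_0$ and a decomposition $O^{\pi_0'}(G)=S_0\times A$ with $A$ abelian and $S_0\cong\mathrm{SL}_2(u^\alpha)$ or $\mathrm{PSL}_2(u^\alpha)$; as $S_0$ is non-solvable we get $u^\alpha\ge 4$, and using $\mathrm{PSL}_2(4)\cong\mathrm{PSL}_2(5)$ I may arrange $u^\alpha\ge 5$. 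Since $S_0$ is perfect, $S_0=[O^{\pi_0'}(G),O^{\pi_0'}(G)]$ is characteristic in $O^{\pi_0'}(G)\trianglelefteq G$, hence $S_0\trianglelefteq G$. In $\overline{G}$ the solvable radical is trivial; the normal solvable subgroup $S_0\cap R(G)$ of the quasisimple group $S_0$ lies in $Z(S_0)$, and $Z(\overline{S_0})\trianglelefteq\overline{G}$ is abelian, hence trivial. Thus $S:=\overline{S_0}\cong\mathrm{PSL}_2(u^\alpha)$ is a normal simple subgroup of $\overline{G}$, and the preimage $M$ of $S$ satisfies $R(G)<M\trianglelefteq G$ with $M/R(G)=S$.

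The decisive step, which I expect to be the main obstacle, is to upgrade ``$S\trianglelefteq\overline{G}$ simple'' to ``$\overline{G}$ almost simple with socle $S$''; equivalently, to rule out a second non-abelian simple factor $T$ in $\mathrm{Soc}(\overline{G})$. The idea is that a common prime must become isolated in $\Delta(G)^c$. Indeed, $S$ and $T$ are non-abelian simple, so neither has a normal Sylow $2$-subgroup and hence $2\in\rho(S)\cap\rho(T)$ (It\^{o}--Michler). For any $w\in\rho(S)\cup\rho(T)$ I can choose $\alpha\in\mathrm{Irr}(S)$ and $\beta\in\mathrm{Irr}(T)$ splitting the pair $\{2,w\}$ between $\alpha(1)$ and $\beta(1)$; since $S\times T\le\mathrm{Soc}(\overline{G})$, a character $\psi\in\mathrm{Irr}(\mathrm{Soc}(\overline{G}))$ over $\alpha\times\beta$ and any $\chi\in\mathrm{Irr}(\overline{G}\mid\psi)$ satisfy $\alpha(1)\beta(1)\mid\psi(1)\mid\chi(1)$, so $2w\mid\chi(1)\in\mathrm{cd}(G)$ and $2$ is adjacent to $w$ in $\Delta(G)$. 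Hence $2$ is adjacent in $\Delta(G)$ to every vertex of $\rho(S)\cup\rho(T)$. The genuine technical heart is to show $2$ has no neighbour in $\Delta(G)^c$ coming from $R(G)$ either: one must check that every $w\in\rho(G)\setminus(\rho(S)\cup\rho(T))$ (necessarily arising from some $\varphi(1)$ with $\varphi\in\mathrm{Irr}(R(G))$, by Lemma \ref{fraction}) still satisfies $2w\mid\chi(1)$ for some $\chi\in\mathrm{Irr}(G)$, which is where Lemma \ref{good} (forcing degrees over $R(G)$-characters to acquire primes of the simple quotient) and Lemma \ref{fraction} are used. Granting this, $2$ is isolated in $\Delta(G)^c$; but an isolated vertex lies on no cycle and must belong to $D$ in order to be dominated, so it can lie in no odd dominating set --- a contradiction. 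Therefore $\mathrm{Soc}(\overline{G})=S$, $C_{\overline{G}}(S)=1$, and $\overline{G}$ is almost simple with socle $S\cong\mathrm{PSL}_2(u^\alpha)$, proving (a).

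For (b), fix $x\in D$ and an odd cycle through $x$ with vertex set $\pi_x$. Applying Lemma \ref{cycle} to $\pi_x$ produces a subnormal section isomorphic to $\mathrm{PSL}_2(u_x^{\beta})$ whose image in the almost simple group $\overline{G}$ is a non-trivial normal subgroup; being simple it must coincide with the socle $S$, so $\mathrm{PSL}_2(u_x^\beta)\cong\mathrm{PSL}_2(u^\alpha)$ and every prime of $\pi_x$ (namely $u_x$ together with odd divisors of $u_x^\beta\pm 1$) lies in $\pi(S)$, i.e. $\pi_x\subseteq\pi(S)$. Finally, to see that $u\in\pi_x$, I use Lemma \ref{lw}: the numbers $q-1$ and $q+1$ (with $q=u^\alpha$) divide character degrees of $G$, so $\pi(q-1)$ and $\pi(q+1)$ are cliques of $\Delta(G)$; hence the subgraph of $\Delta(G)^c$ induced on $\pi(S)\setminus\{u\}$ is bipartite with parts $\pi(q-1)$ and $\pi(q+1)$, and therefore any odd cycle contained in $\pi(S)$ must pass through $u$. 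This yields $x,u\in\pi_x\subseteq\pi(S)$, which is exactly (b).
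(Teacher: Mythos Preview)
Your almost-simple step contains a real gap. You aim to show that $2$ is \emph{isolated} in $\Delta(G)^c$, i.e.\ joined in $\Delta(G)$ to every other prime of $\rho(G)$, and you explicitly postpone the case $w\in\rho(G)\setminus(\rho(S)\cup\rho(T))$ with ``Granting this''. But the tool you name, Lemma~\ref{good}, requires a quotient $G/N$ isomorphic to a single non-abelian simple group, whereas here you are assuming $\mathrm{Soc}(\overline G)$ has at least two factors; nor is there any mechanism forcing $2$ to be adjacent in $\Delta(G)$ to primes arising from $\mathrm{Irr}(R(G))$ or from outer automorphisms of other socle components. Global isolation of $2$ is therefore the wrong target, and your sketched fix does not go through.

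The paper avoids this by inverting the order of the two steps. Before looking at $C_{\overline G}(S)$ it applies Lemma~\ref{cycle} to \emph{every} $x\in D$ (not just one fixed cycle), obtaining normal subgroups $R_x\trianglelefteq G$ whose simple quotients $S_x$ are simple normal subgroups of $\overline G$; this immediately yields $D\subseteq\bigcup_x\pi(S_x)$. If two of the $S_x$ were distinct, or if a further simple factor $T$ appeared in $\mathrm{Soc}(\overline G)$, then some direct product $S_i\times T$ sits in $\overline G$ and the inclusion $\Delta(S_i\times T)\subseteq\Delta(G)$ shows at once that $2$ is adjacent in $\Delta(G)$ to every prime of $\pi(S_i)\cup\pi(T)$ --- hence to every element of $D\setminus\{2\}$. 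Moreover $2\notin D$: an odd cycle through $2$ would have vertex set $\pi_2\subseteq\pi(S_j)$ for some $j$, yet $2$ has no $\Delta(G)^c$-neighbours inside $\pi(S_j)$. Thus $2$ is neither in $D$ nor dominated by $D$, a contradiction. The key point is that one only needs $2$ joined to the primes of $D$, not to all of $\rho(G)$, and the containment $D\subseteq\bigcup_x\pi(S_x)$ --- won by processing all of $D$ first --- delivers exactly that. Once you reorganise the argument this way, your part~(b) (which is fine, though you can read off that $\pi(q\pm1)$ are cliques directly from $\mathrm{cd}(\mathrm{PSL}_2(q))$ rather than invoking Lemma~\ref{lw}) follows as you wrote it.
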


\begin{proof}
Let $x\in D$. Then there exists $\pi_x \subseteq \rho(G)$ such that $x\in \pi_x$ and $\pi_x$ is the set of vertices of an odd cycle in $\Delta(G)^c$.
 Thus by Lemma \ref{cycle}, $N_x:=O^{\pi^\prime_x}(G)=R_x\times A_x$, where $A_x$ is abelian,
 $R_x \cong \rm{SL}_2(u_x^{\alpha_x})$ or $R_x \cong \rm{PSL}_2(u_x^{\alpha_x})$ for a prime $u_x\in \pi_x$ and a positive integer $\alpha_x$,
  and the primes in $\pi_x - \{u_x\}$ are alternately odd divisors of $u_x^{\alpha_x}+1$ and  $u_x^{\alpha_x}-1$.
  Let $\mathcal{K}:=\{R_x|\;x\in D\}$. Define $K$ as the product of all subgroups in  $\mathcal{K}$.
   We fix a subset $\{R_{x_1},\dots, R_{x_l}\}$ of $\mathcal{K}$ such that $K/Z(K)\cong R_{x_1}/Z(R_{x_1})\times \dots \times R_{x_l}/Z(R_{x_l})$. Note that $D\subseteq \pi(K/Z(K))$ and for every $1\leq i\leq l$, $S_i:= R_{x_i}/Z(R_{x_i})\cong \rm{PSL}_2(u_{x_i}^{\alpha_{x_i}})$ and $2\in \pi(S_i)$. If $l>1$, then we can see that $2\notin D$ and in $\Delta(G)$, $2$ is adjacent to all vertices in $D$. It is a contradiction as $D$ is an odd dominating set for $\Delta(G)^c$. Hence $l=1$ and $\mathcal{K}=\{R_{x_1}\}$. We set $N:=N_{x_1}$, $u:=u_{x_1}$ and $\alpha:=\alpha_{x_1}$. Let $M:=NR(G)$. Then $S:=M/R(G)\cong N/R(N)\cong \rm{PSL}_2(u^\alpha)$ is a non-abelian minimal normal subgroup of $G/R(G)$. When $S\cong A_5$, we choose $u:=5$.
   Note that for every $x\in D$, $u\in\pi_x\subseteq \pi(S)$.
 Let $C/R(G)=C_{G/R(G)}(M/R(G))$. We claim that $C=R(G)$. Suppose on the contrary that $C\neq R(G)$ and let $L/R(G)$ be a chief factor of $G$ with $L\leqslant C$. Then $L/R(G)\cong T^k$, for some non-abelian simple group $T$ and some integer $k\geqslant 1$.
 As $L\leqslant C$, $LM/R(G)\cong L/R(G)\times M/R(G) \cong S\times T^k$. Since $2\in \pi (S)\cap \pi(T) $ and $\Delta(S \times T^k)\subseteq \Delta(G)$, it is easy to see that $2\notin D$ and in $\Delta(G)$, $2$ is adjacent to all vertices in $D$ which is impossible. Therefore $G/R(G)$ is an almost simple group with socle $S=M/R(G)$. It completes the proof.
\end{proof}

\noindent Proof of Main Theorem.
 \textbf{a}$\Rightarrow$ \textbf{b)}  Using Lemma \ref{main}, there exists a normal subgroup $R(G)<M\leq G$ so that $G/R(G)$ is an almost simple group with socle $S:=M/R(G)\cong \rm{PSL}_2(u^\alpha)$, where $u$ is a prime, $\alpha$ is a positive integer and $u^\alpha \geq 5$. Since $\Delta(G)^c$ is non-bipartite, it is enough to show that $u$ is an isolated vertex for $\Delta(G)$. Suppose on the contrary that there exists $v\in \rho(G)-\{u\}$ such that $u$ and $v$ are adjacent vertices in $\Delta(G)$. Then for some $\chi\in \rm{Irr}(G)$, $uv$ divides $\chi(1)$. Let $\varphi \in \rm{Irr}(M)$ and $\theta\in\rm{Irr}(R(G))$ be constituents of $\chi_M$ and $\varphi_{R(G)}$, respectively. We claim that $u,v\notin \pi([G:M])$. On the contrary we assume that for some $y\in \{u,v\}$, $y$ divides $[G:M]$. By Lemma \ref{lw} in $\Delta(G)$, $y$ is adjacent to all vertices in $\pi(u^{2\alpha}-1)$. Note that using Lemma \ref{main} (b), $D\subseteq \pi(S)$. If $y=u$, then there is no any $\pi\subseteq \pi(S)$ with this property that $u\in \pi$ and $\pi$ is the set of vertices of an odd cycle in $\Delta(G)^c$. It is a contradiction with Lemma \ref{main} (b). If $y=v$ and $v\in D$, then we again obtain a contradiction with Lemma \ref{main} (b). Also if $y=v$ and $v\notin D$, then $v$ is adjacent to all vertices in $D$ which is a contradiction with this fact that $D$ is an odd dominating set for $\Delta(G)^c$. Hence $u,v \notin \pi([G:M])$. Thus by Lemma \ref{fraction}, $uv$ divides $\varphi(1)\in \rm{cd}(M| \theta)$. Since $\Delta(G)^c$ is non-bipartite, using Lemmas \ref{good} and \ref{ote}, $\theta$ is $M$-invariant. Therefore as $\rm{SL}_2(u^\alpha)$ is the Schur representation of $S$, we deduce that $\theta(1)$ is divisible by $u$ or $v$. Hence for some $y\in \{u,v\}$, $y$ is adjacent to all vertices in $\pi(S)-\{y\}$. It is a contradiction thus $u$ is an isolated vertex and it completes the proof of this part.\\
 \textbf{b}$\Rightarrow$\textbf{c)} Since the domination number of $\Delta(G)^c$ is equal to $1$, we have nothing to prove.\\
\textbf{c}$\Rightarrow$\textbf{a)} Since $\Delta(G)^c$ is non-bipartite, by Lemma \ref{bipartite}, $G$ is non-solvable. Thus as $\Delta(G)$ is disconnected, using Lemma \ref{isolated}, $\Delta(G)$ has an isolated vertex $u$. Hence $D:=\{u\}$ is an odd dominating set for $\Delta(G)^c$. \\
Finally, as parts \textbf{(a)}, \textbf{(b)} and \textbf{(c)} are equivalent, when one of these parts occurs, then by Lemma \ref{main} (a),  there exists a normal subgroup $R(G)<M\leq G$ so that $G/R(G)$ is an almost simple group with socle $S:=M/R(G)\cong \rm{PSL}_2(u^\alpha)$, where $u$ is a prime, $\alpha$ is a positive integer and $u^\alpha \geq 5$.

\section*{Acknowledgements}
This research was supported in part
by a grant  from School of Mathematics, Institute for Research in Fundamental Sciences (IPM).


\end{document}